\documentclass[11pt, twoside, letterpaper]{amsart}
\usepackage{amsmath, hyperref, color}
\usepackage{leftidx}

\addtolength{\hoffset}{-1.9cm}
\addtolength{\textwidth}{3.8cm}
\addtolength{\voffset}{-0.7cm}
\addtolength{\textheight}{1.4cm}

\vfuzz2pt
\hfuzz2pt

\newtheorem{theorem}{Theorem}[section]
\newtheorem{corollary}[theorem]{Corollary}
\newtheorem{lemma}[theorem]{Lemma}
\newtheorem{proposition}[theorem]{Proposition}
\newtheorem{definition}[theorem]{Definition}
\newtheorem{remark}[theorem]{Remark}

\newtheorem{example}[theorem]{Example}

\numberwithin{equation}{section}

\def\R{\mathbb R}
\def\Pa{\mathbb P}
\def\Fcal{\mathcal F}
\def\Gcal{\mathcal G}

\def\Mcal{\mathcal M}

\def\proofof#1{\begin{proof}[Proof (of~#1).]}

\def\lbr{[\hspace{-0.05cm}[}
\def\rbr{]\hspace{-0.05cm}]}
\newcommand{\cadlag}{c\`{a}dl\`{a}g\ }
\newcommand{\bra}[1]{\left[#1\right]}
\newcommand{\indic}{\mathbb{I}}
\newcommand{\expec}{\mathbb{E}}

\begin{document}
\hfill{\today}\bigskip

\title[Characterization of max-continuous local martingales]{Characterization of max-continuous local martingales vanishing at infinity}

\author{B. Acciaio}
\address{The London School of Economics and Political Science}
\email{b.acciaio@lse.ac.uk}

\author{I. Penner}
\address{Hochschule f\"ur Technik und Wirtschaft Berlin}
\email{Irina.Penner@HTW-Berlin.de}

\thanks{The authors would like to thank Anna Aksamit and Constantinos Kardaras for helpful discussions, as well as an anonymous referee for constructive comments that helped to improve the manuscript.}

\keywords{Honest time; predictable stopping time; predictable set; local martingale; time of maximum}

\begin{abstract}
We provide a characterization of the family of non-negative local martingales that have continuous running supremum and vanish at infinity.
This is done by describing the class of random times that identify the times of maximum of such processes. In this way we extend to the case of general filtrations a result proved by Nikeghbali and Yor \cite{NikeghbaliYor06} for continuous filtrations. Our generalization is complementary to the one presented by Kardaras~\cite{Kardaras13}, and is obtained by means of similar tools.
\end{abstract}

\maketitle

\section{Statement of the main result}
\subsection{Set-up and notations}
We consider a filtered probability space $(\Omega, \Fcal, {\bf F}, \Pa)$, where ${\bf F}=(\Fcal_t)_{t\in\R_+}$ is a filtration satisfying the usual conditions of right-continuity and saturation by $\Pa$-null sets of $\Fcal:=\Fcal_\infty=\bigvee_{t\in\R_+}\Fcal_t$. For a process $X$, we write $\leftidx{^o}{X}$ and $\leftidx{^p}{X}$ to indicate the optional and predictable projections of $X$ (see Appendix~\ref{app.p}), and we
use the notation $X_\infty:=\lim_{t\to\infty}X_t$ when the limit is well defined.\footnote{For processes for which the limit $\lim_{t\to\infty}X_t$ is well defined, we will consider the jump $\Delta X_\tau$ for general stopping times $\tau$, with the convention that $\Delta X_\tau\indic_{\{\tau=\infty\}}=0$.}
Inequalities between random variables and inclusions between subsets of $\Omega$ are always intended in the $\Pa$-a.s. sense. All (local) martingales are assumed to have \cadlag paths.

A \emph{random time} $\rho$ is a $[0,\infty]$-valued $\Fcal$-measurable map, and it is said to avoid all (predictable) stopping times if $\Pa\bra{\rho=\sigma}=0$ for every (predictable) stopping time $\sigma$. With this we also mean that $\Pa\bra{\rho=0}=0$. Furthermore, a random time $\rho$ is said to be the end of an optional (resp. predictable) set if there exists a set $\Gamma\subseteq[0,\infty]\times\Omega$ $\mathbf{F}$-optional (resp. predictable) such that $\rho=\sup\{t : (t,\omega)\in\Gamma\}$. 
To any random time $\rho$ we associate the so-called Az\'ema supermartingale, which is the optional projection of the stochastic interval $\indic_{\lbr0, \rho\lbr}$, i.e., the process $Z^\rho:=\leftidx{^o}{\left(\indic_{\lbr0, \rho\lbr}\right)}$; see \cite{Azema72}. Note that, for every $t\in\R_+$, $Z^\rho_t=\Pa\bra{\rho>t|\Fcal_t}$. The process $Z^\rho$ plays a key role in the enlargement of filtrations, as shown in \cite{Yor78}, \cite{Jeulin80a} and \cite{JeulinYor85}, as well as in the characterization of special classes of random times and in the theory of stochastic processes in general, see e.g. \cite{Nikeghbali06} and \cite{NikeghbaliPlaten12} and the references therein. Below we recall a prominent class of random times, which is the most studied within the non-stopping times.

\begin{definition}\label{def.ht}
A random time $\rho$ is called \emph{honest} if for every $t>0$, $\rho$ coincides with a $\Fcal_t$-measurable random variable on $\{\rho<t\}$.
\end{definition}
It is well-known that the end of an optional set is an honest time, and that a finite honest time is the end of an optional set; see \cite[p.74]{Jeulin80a}. Among honest times, widely studied in the literature are those which are the end of predictable sets. They can be identified through the corresponding Az\'ema supermartingale as follow, see \cite[Theorem~1.4]{Azema72} and \cite[Proposition~1]{JeulinYor78a}.
\begin{theorem}\label{thm.az72}
A random time $\rho$ satisfying $\Pa\bra{\rho>0}=1$ is the end of a predictable set if and only if $Z^\rho_{\rho-}=1$.
\end{theorem}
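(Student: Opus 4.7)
The plan is to reduce both implications to a single identity relating $Z^\rho_{T-}$ to the conditional distribution of $\rho$ at predictable stopping times, and then deploy the predictable section theorem. The key identity I would prove first is that, for every predictable stopping time $T$,
\[
Z^\rho_{T-} = \Pa[\rho \geq T \mid \Fcal_{T-}] \quad \text{on } \{T<\infty\}.
\]
To establish this, I use the Doob--Meyer decomposition $Z^\rho = M - A$, where $M$ is a uniformly integrable \cadlag martingale and $A$ is the dual predictable projection of $\indic_{\lbr \rho, \infty \lbr}$. Since both $T$ and $A$ are predictable, the variables $A_T$ and $A_{T-}$ are $\Fcal_{T-}$-measurable, while $\expec[M_T \mid \Fcal_{T-}]=M_{T-}$. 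Taking $\Fcal_{T-}$-conditional expectation of $Z^\rho_T = M_T - A_T$ and comparing with $Z^\rho_{T-}=M_{T-}-A_{T-}$, I obtain $Z^\rho_{T-}-\Pa[\rho>T\mid\Fcal_{T-}]=\Delta A_T$; the standard identification $\Delta A_T = \Pa[\rho=T\mid\Fcal_{T-}]$ then yields the identity.

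For the direction $(\Leftarrow)$, I would consider the predictable set $\Gamma:=\{Z^\rho_-=1\}$ and its end $\sigma$. The assumption $Z^\rho_{\rho-}=1$ places $(\rho,\omega)$ inside $\Gamma$, so $\sigma\geq\rho$. Conversely, by the key identity, every predictable stopping time $T$ with $\lbr T\rbr\subseteq\Gamma$ satisfies $\Pa[\rho\geq T\mid\Fcal_{T-}]=1$, hence $T\leq\rho$ on $\{T<\infty\}$; the predictable section theorem then forces $\sigma\leq\rho$, so $\rho=\sigma$ is the end of $\Gamma$.

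For the direction $(\Rightarrow)$, given that $\rho$ is the end of a predictable set $\Gamma$, every predictable section $T$ of $\Gamma$ satisfies $T\leq\rho$ by definition of the end, so the key identity yields $Z^\rho_{T-}=1$. The predictable section theorem then gives $\Gamma\subseteq\{Z^\rho_-=1\}$ up to evanescence. To conclude $Z^\rho_{\rho-}=1$, I would distinguish two cases: either $(\rho,\omega)\in\Gamma$, in which case the conclusion is immediate, or $\rho$ is only an accumulation point of $\Gamma$ from the left, in which case a sequence $t_n\uparrow\rho$ with $(t_n,\omega)\in\Gamma$ gives $Z^\rho_{t_n-}=1$ and the left-continuity of the process $Z^\rho_-$ passes to the limit.

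The main obstacle I anticipate is the careful derivation of the key identity and the identification of $\Delta A_T$ with $\Pa[\rho=T\mid\Fcal_{T-}]$; once this identity is in hand, both implications follow from standard applications of the predictable section theorem, together with some attention to the possibly non-attained supremum defining the end of a predictable set.
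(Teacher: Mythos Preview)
The paper does not supply its own proof of this statement; Theorem~\ref{thm.az72} is quoted from the literature, with a reference to Az\'ema \cite[Theorem~1.4]{Azema72} and Jeulin--Yor \cite[Proposition~1]{JeulinYor78a}. So there is no in-paper argument to compare against, and your proposal stands as an independent proof. The route you take---establishing $Z^\rho_-={}^p(\indic_{\lbr 0,\rho\rbr})$ via the Doob--Meyer decomposition and the identification $\Delta a_T=\Pa[\rho=T\mid\Fcal_{T-}]$, then using predictable sections---is indeed the classical one, and your treatment of the key identity and of the direction $(\Rightarrow)$ is sound (note that $t\mapsto Z^\rho_{t-}$ is left-continuous since $Z^\rho$ is \cadlag, so the limiting argument for accumulation points is legitimate).

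There is, however, one step in $(\Leftarrow)$ that is too compressed to pass as written. From ``every predictable stopping time $T$ with $\lbr T\rbr\subseteq\Gamma$ satisfies $T\leq\rho$'' you invoke the predictable section theorem to conclude $\sigma\leq\rho$. But the section theorem, applied to $\Gamma$, only controls the \emph{projection} $\pi(\Gamma)$, not the end; and you cannot section $\Gamma\cap\,\rbr\rho,\infty\lbr$ directly because this set is not predictable ($\rho$ is merely a random time). A clean way to close the gap: for each rational $q>0$, the set $\Gamma_q:=\Gamma\cap\,\rbr q,\infty\lbr$ \emph{is} predictable, and any predictable section $T$ of $\Gamma_q$ satisfies $q<T\leq\rho$ on $\{T<\infty\}$; hence $\{T<\infty\}\subseteq\{\rho>q\}$, and the section theorem gives $\Pa[\sigma>q]=\Pa[\pi(\Gamma_q)]\leq\Pa[\rho>q]$. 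Combined with $\rho\leq\sigma$ (which you already have), this yields $\rho=\sigma$ a.s. Once you insert this argument, the proof is complete.
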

It was first noted in \cite{NikeghbaliYor06} that honest times can be related to times of maxima of local martingales; see also \cite{NikeghbaliPlaten08}, \cite{NikeghbaliPlaten12}, \cite{Kardaras13}.
In this note we shall provide a characterization of the following set of local martingales:
\[\Mcal_0:=\{\textrm{$L$ local martingale} : L\geq 0,\, L_0=1,\, L_\infty=0,\, L^*\, \textrm{is continuous}\},\]
where $L^*$ denotes the running supremum, that is, $L^*_t:=\sup_{0\leq s\leq t} L_s$ for $t\geq0$. Such characterization will be given in terms of the times of maximum of the processes in $\Mcal_0$, in the sense of the following definition, which is the same considered in \cite{Kardaras13}.
\begin{definition}\label{def:rhol}
For $L\in\Mcal_0$, we define the \emph{last time of maximum} of $L$ as
\[\rho_L:=\sup\{t>0 : L_{t-}=L^*_{t-}\}.\]
\end{definition}
Let $L\in\Mcal_0$. Note that $\Pa\bra{\rho_L<\infty}=1$ since $L_{\infty}=0$, and that $L_{\rho_L-}=L^*_{\rho_L-}=L^*_{\rho_L}$ by left-continuity of $L_-$ and continuity of $L^*$. Moreover, since $L^*$ is continuous, $L^*_\infty$ cannot be reached with a jump, and $\rho_L=\sup\{t>0 : L_{t-}=L^*_t\}=\sup\{t>0 : L_{t-}=L^*_\infty\}$, with $L_{\rho_L-}=L^*_\infty$. This motivates the name of \emph{last time of maximum}. In Corollary~\ref{cor.d} it is proved that $\rho_L$ is in fact the \emph{only} time of maximum of $L$ (i.e., the only time $\rho$ such that $L_{\rho-}=L_\infty^*$), hence the specification \emph{last} can be dropped.
The maximum is actually attained at $\rho_L$ (i.e., $L_{\rho}=L_\infty^*$) if $L$ has continuous paths, while this is in general not the case  if $L$ has jumps. In Corollary~\ref{cor.m} we show that $\rho_L$ can be expressed without need of taking left limits, that is,
\begin{equation}\label{rr}
\rho_L=\sup\{t>0 : L_{t}=L^*_{t}\},
\end{equation}
which is the definition considered in \cite{NikeghbaliYor06}.

\subsection{State of the art and main theorem}\label{sect:at}
The following characterization of $\Mcal_0$ is a consequence of~\cite{NikeghbaliYor06}.

\begin{theorem}\label{thm.ny}
Assume that every martingale is continuous. Then a random time $\rho$ satisfies $\rho=\rho_L$ for some local martingale $L\in\Mcal_0$ if and only if it is an honest time that avoids all stopping times. In this case, $Z^\rho=L/{L^*}$.
\end{theorem}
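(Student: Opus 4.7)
The plan is to combine a conditional version of Doob's maximal identity with an It\^o/product-rule analysis of the Az\'ema supermartingale, exploiting throughout that the standing assumption makes $L$, $L^*$, and $Z^\rho$ continuous and forces every stopping time to be accessible.

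For the ``only if'' direction, fix $L\in\Mcal_0$; by hypothesis $L$ and hence $L^*$ have continuous paths. Honesty of $\rho_L$ is immediate, since on $\{\rho_L<t\}$ one has $\rho_L=\sup\{0<s\le t:L_s=L^*_s\}$, which is $\Fcal_t$-measurable. To obtain $Z^{\rho_L}=L/L^*$ I would first note the $\Pa$-a.s.\ identity $\{\rho_L>t\}=\{L^*_\infty>L^*_t\}$ (a continuous martingale attaining its overall maximum at a unique point), and then apply optional stopping to $L$ at $T_a:=\inf\{s\ge t:L_s\ge a\}$ for $a>L^*_t$: the stopped martingale $L^{T_a}$ is bounded and converges to $a\,\indic_{\{T_a<\infty\}}$, so $\Pa[T_a<\infty|\Fcal_t]=L_t/a$, whence $Z^{\rho_L}_t=\Pa[L^*_\infty>L^*_t|\Fcal_t]=L_t/L^*_t$ upon letting $a\downarrow L^*_t$. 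Avoidance is then read off from the continuity of $Z^{\rho_L}=L/L^*$: continuity forces the dual predictable projection of $\indic_{\lbr\rho_L,\infty\rbr}$ to be continuous, hence $\Pa[\rho_L=\tau]=0$ for every predictable $\tau$, and in a continuous filtration every stopping time is accessible, so avoidance extends to all stopping times.

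For the converse, given $\rho$ honest and avoiding stopping times, consider the Doob--Meyer decomposition $Z^\rho=\mu-A^\rho$ with $\mu$ a continuous martingale and $A^\rho$ a continuous predictable increasing process (continuity of $A^\rho$ is implied by the avoidance hypothesis together with the continuous-filtration assumption). Since $\rho$ is honest in a continuous filtration it is the end of a predictable set, so Theorem~\ref{thm.az72} yields $Z^\rho_{\rho-}=1$, and the classical honesty property ensures that $dA^\rho$ is carried by the set $\{Z^\rho=1\}$. The candidate process is then $L_t:=Z^\rho_t\exp(A^\rho_t)$; since $A^\rho$ is continuous and of finite variation, integration by parts yields
\[
dL_t=e^{A^\rho_t}\,d\mu_t+e^{A^\rho_t}(Z^\rho_t-1)\,dA^\rho_t=e^{A^\rho_t}\,d\mu_t,
\]
so $L$ is a non-negative continuous local martingale with $L_0=1$, and $L_\infty=0$ follows from $Z^\rho_\infty=0$ (as $\rho<\infty$ a.s.) together with the supermartingale convergence applied to $L$.

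The key step in establishing $\rho_L=\rho$ and $Z^\rho=L/L^*$ is the identification $L^*=\exp(A^\rho)$: because $dA^\rho$ charges only $\{Z^\rho=1\}=\{L=e^{A^\rho}\}$, the continuous non-decreasing process $e^{A^\rho}$ grows exactly when $L$ meets it, and comparing with the minimal non-decreasing process dominating $L$ forces $e^{A^\rho}=L^*$. This gives $L/L^*=Z^\rho$ and
\[
\rho_L=\sup\{t>0:L_t=L^*_t\}=\sup\{t>0:Z^\rho_t=1\}=\rho,
\]
the last equality using $Z^\rho_{\rho-}=1$ together with the avoidance property to rule out that $Z^\rho$ revisits $1$ after $\rho$. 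I expect the main obstacle to be precisely this last identification: establishing that $dA^\rho$ lives on $\{Z^\rho=1\}$ and that $Z^\rho$ cannot return to $1$ past $\rho$ both rely essentially on the combination of honesty, avoidance, and the continuous-filtration hypothesis.
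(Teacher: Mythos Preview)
Your proof is essentially correct and follows the original Nikeghbali--Yor route, which is natural since the paper does not prove Theorem~\ref{thm.ny} directly but cites it as a consequence of~\cite{NikeghbaliYor06}. The paper's own contribution is Theorem~\ref{thm.char}, which specialises to Theorem~\ref{thm.ny} under the continuous-filtration hypothesis via Remark~\ref{rmk.thm}, and its proof proceeds quite differently from yours.

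For the ``if'' direction you take the additive Doob--Meyer decomposition $Z^\rho=\mu-A^\rho$, set $L:=Z^\rho e^{A^\rho}$, and identify $L^*=e^{A^\rho}$ by a Skorokhod-reflection argument resting on the fact that $dA^\rho$ is carried by $\{Z^\rho=1\}$. The paper instead starts from the \emph{predictable multiplicative} decomposition $Z^\rho=LD$ of Proposition~\ref{prop.pr}, shows via a change-of-time that $D_\rho$ is uniformly distributed (Proposition~\ref{prop.d}), and then combines $Z^\rho_{\rho-}=1$ with Doob's maximal identity (Lemma~\ref{lemma_Doob}) to force $L_{\rho-}=L^*_\infty$; the identification $D=1/L^*$ and $\rho=\rho_L$ then follow from Corollary~\ref{cor.d}, and uniqueness of $L$ from the Doob--Meyer decomposition applied to $L/L^*$. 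Your construction is more explicit in the continuous case (one sees immediately that $L^*=e^{A^\rho}$), but it leans heavily on continuity: the product-rule step, the support identity $\int(1-Z^\rho)\,dA^\rho=0$, and the Skorokhod argument all require $A^\rho$ and $Z^\rho$ continuous. The paper's multiplicative-decomposition approach is designed precisely to survive the loss of continuity, which is why it extends to Theorem~\ref{thm.char}.

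A few places in your sketch deserve one more line of justification: that $A^\rho_\infty<\infty$ a.s.\ (needed for $L_\infty=0$; it follows from $\expec[A^\rho_\infty]=\Pa[\rho<\infty]=1$), that $dA^\rho$ is carried by $\{Z^\rho=1\}$ (this is $\expec[\int(1-Z^\rho_{s-})\,da_s]=\expec[1-Z^\rho_{\rho-}]=0$ combined with continuity), and that $\sup\{t:Z^\rho_t=1\}=\rho$ (this is the classical characterisation of honest times via $\tilde Z^\rho$, which equals $Z^\rho$ here since $A$ is continuous). None of these is a genuine obstacle.
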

Without the continuity assumption, this characterization is no more true in general, in the sense that there exist local martingales $L$ in $\Mcal_0$ such that $\Pa\bra{\rho_L=\tau}>0$ for some stopping time $\tau$; see Example~\ref{ex.exp}.
Therefore, in order to obtain a similar characterization, one may either require some additional property on the local martingales, or alternatively consider a larger class of honest times.
The first approach is the one adopted in \cite{Kardaras13}, where the author considers those martingales in $\Mcal_0$ that do no jump at the time of maximum, obtaining the following characterization.
\begin{theorem}\label{thm.k}
A random time $\rho$ satisfies $\rho=\rho_L$ for some local martingale $L\in\Mcal_0$ such that $\Delta L=0\; \textrm{on}\; \{L_-=L^*_-\}$ if and only if it is an honest time that avoids all stopping times. In this case, $L_{\rho-}=L_{\rho}=L^*_\infty$, and $Z^\rho=L/{L^*}$.
\end{theorem}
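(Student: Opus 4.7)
The plan is to prove the two implications of the equivalence separately, exploiting Doob's maximal identity for positive local martingales vanishing at infinity together with the multiplicative decomposition of the Azéma supermartingale.

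\textbf{Necessity.} Given $L\in\Mcal_0$ with $\Delta L=0$ on $\{L_-=L^*_-\}$, set $\rho:=\rho_L$. I would first observe that $\{L_-=L^*_-\}$ is a predictable set (both processes are left-continuous and adapted), so $\rho_L$ is the end of a predictable set and therefore honest. By continuity of $L^*$ one has $L^*_{\rho_L-}=L^*_\infty=L_{\rho_L-}$, placing $\rho_L$ inside $\{L_-=L^*_-\}$, so the no-jump hypothesis yields $L_{\rho_L}=L_{\rho_L-}=L^*_\infty$. To identify $Z^{\rho_L}$, I would use Doob's maximal identity
\[
  \Pa[L^*_\infty>y\mid\Fcal_t]=L_t/y\quad\text{for }\Fcal_t\text{-measurable }y\geq L^*_t,
\]
combined with $\{\rho_L>t\}=\{L^*_t<L^*_\infty\}$ (by continuity of $L^*$), to conclude $Z^{\rho_L}_t=L_t/L^*_t$. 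An Itô computation, using that $dL^*$ is carried by $\{L=L^*\}$, then produces the decomposition
\[
  Z^{\rho_L}_t = 1 + \int_0^t \frac{dL_s}{L^*_s} - \log L^*_t,
\]
whose predictable finite-variation part $\log L^*$ is continuous. Hence $\rho_L$ avoids all predictable stopping times; avoidance at totally inaccessible $\sigma$ follows because a nonzero jump of $Z^{\rho_L}$ at $\sigma$ would require $\Delta L_\sigma\neq 0$, yet on $\{\rho_L=\sigma\}\subseteq\{L_-=L^*_-\}$ the no-jump hypothesis forces $\Delta L_\sigma=0$.

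\textbf{Sufficiency.} Given an honest $\rho$ avoiding all stopping times, I would reconstruct $L$ from the Azéma supermartingale. Honesty and finiteness of $\rho$ give $Z^\rho_\infty=0$, and avoidance yields a continuous compensator $A$ in the Doob-Meyer decomposition $Z^\rho=N-A$ with $A_0=0$ and $E[A_\infty]=1$. I would then set $L^*:=\exp(A)$ (continuous, nondecreasing, $L^*_0=1$) and $L:=Z^\rho\cdot L^*$, and verify via Itô's formula that $L$ is a non-negative local martingale in $\Mcal_0$, with running supremum equal to $L^*$ (because $dA$ is carried on $\{Z^\rho_-=1\}=\{Z^\rho=1\}=\{L=L^*\}$ by Theorem~\ref{thm.az72}), and with $\Delta L=L^*\Delta Z^\rho=0$ on $\{L_-=L^*_-\}=\{Z^\rho_-=1\}$: this last identity holds because at any stopping time $\sigma$ with $Z^\rho_{\sigma-}=1$, a nonzero jump $\Delta Z^\rho_\sigma$ would force $\Pa(\rho=\sigma\mid\Fcal_\sigma)>0$, contradicting avoidance.

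\textbf{Main obstacle.} The main technical point, shared by both directions, is the precise jump analysis: one has to translate jumps of $L$ at its running maximum into jumps of $Z^\rho=L/L^*$ via continuity of $L^*$, and then relate jumps of $Z^\rho$ at stopping times to probabilities $\Pa(\rho=\sigma)$ via the dual optional projection of $\indic_{\lbr\rho,\infty\rbr}$. Once these correspondences are in place, the rest consists of routine applications of Doob's maximal identity and Itô's formula, in the spirit of \cite{NikeghbaliYor06} and \cite{Kardaras13}.
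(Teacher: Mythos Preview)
The paper does not prove Theorem~\ref{thm.k}: it is quoted from \cite{Kardaras13} as background, and the paper's own contribution is the different (and complementary) Theorem~\ref{thm.char}. So there is no in-paper proof to compare against directly; what can be compared is the paper's proof of Theorem~\ref{thm.char}, which specialises to the present setting, together with the argument in \cite{Kardaras13} that the paper invokes.

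Your necessity argument has a genuine gap at the totally inaccessible case. You write that ``avoidance at totally inaccessible $\sigma$ follows because a nonzero jump of $Z^{\rho_L}$ at $\sigma$ would require $\Delta L_\sigma\neq 0$''. But the implication you are tacitly using, namely that $\Pa[\rho_L=\sigma]>0$ forces $\Delta Z^{\rho_L}_\sigma\neq0$, is not established: the link $\Delta A_\sigma=\Pa[\rho_L=\sigma\mid\Fcal_\sigma]$ involves the dual \emph{optional} projection $A$, and the Doob--Meyer decomposition you wrote down only identifies the dual \emph{predictable} projection $a=\log L^*$. The martingale part can perfectly well absorb a jump of $A$ at a totally inaccessible time, leaving $Z^{\rho_L}$ continuous there. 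The clean fix---and this is exactly what the paper does in its proof of Theorem~\ref{thm.char}, (I)$\Rightarrow$(II), citing \cite{Kardaras13}---is to bypass the compensator entirely for this step: on $\{\rho_L=\sigma\}$ the no-jump hypothesis gives $L_\sigma=L^*_\sigma$, hence by the conditional Doob identity \eqref{conditional_Doob_eq_tau} one has $\Pa[\sup_{t>\sigma}L_t>L^*_\sigma\mid\Fcal_\sigma]=1$, which forces $\rho_L>\sigma$ a.s.\ on that set and thus $\Pa[\rho_L=\sigma]=0$. This argument works for \emph{every} stopping time $\sigma$ at once, making your separate treatment of the predictable case via the continuity of $\log L^*$ unnecessary.

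Your sufficiency direction is in the spirit of \cite{NikeghbaliYor06} (reconstruct $L$ as $Z^\rho e^{A}$) rather than the paper's route via the abstract predictable multiplicative decomposition of Proposition~\ref{prop.pr} and the uniform law of $D_\rho$ (Proposition~\ref{prop.d}). Your approach is more explicit, but as written it leans on two facts that are not free: that $dA$ is carried by $\{Z^\rho_-=1\}$ (you invoke Theorem~\ref{thm.az72}, but that only gives $Z^\rho_{\rho-}=1$, not control on the whole support of $dA$), and that consequently $e^{A}$ is genuinely the running supremum of $L$. Both are true for honest times avoiding all stopping times, but they require an argument (for instance via $A_t=A_{t\wedge\rho}$ as in the proof of Lemma~\ref{rmk.ht}, combined with the characterisation of the support of $dA$ for honest times). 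The paper's proof of Theorem~\ref{thm.char} sidesteps this by arguing distributionally: $1/L_{\rho-}=D_\rho$ is uniform, $1/L^*_\infty$ stochastically dominates a uniform, and $L_{\rho-}\le L^*_\infty$ forces equality.
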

On the other hand, in the present note we look for conditions that characterize the times of maxima for the processes in the class $\Mcal_0$, without any restriction. We shall therefore consider a larger class of honest times; see Remark~\ref{rmk.thm}. This leads to the main result of this note:
\begin{theorem}\label{thm.char}
For a random time $\rho$ the following are equivalent:
\begin{enumerate}
	\item[(I)] $\rho=\rho_L$ for one (and only one) local martingale $L\in\Mcal_0$;
	\item[(II)] $\rho$ is the end of a predictable set and it avoids all predictable stopping times.
\end{enumerate}
Under (any of) the previous conditions, $L_{\rho-}=L^*_\infty$, and $Z^\rho=L/{L^*}$.
\end{theorem}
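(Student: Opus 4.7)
The plan is to realize the bijection between random times $\rho$ satisfying (II) and processes $L\in\Mcal_0$ explicitly via the Az\'ema supermartingale, with $L:=Z^\rho e^{A}$ in one direction and $Z^\rho=L/L^*$ in the other, where $A$ is the dual predictable projection of $\indic_{\lbr\rho,\infty\rbr}$.

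For (II)$\Rightarrow$(I), I start from the Doob--Meyer decomposition $Z^\rho=M-A$, with $M$ a uniformly integrable martingale and $M_0=1$. Avoidance of predictable stopping times forces $A$ continuous, since $\Delta A_\sigma=\Pa\bra{\rho=\sigma\mid\Fcal_{\sigma-}}$ vanishes at every predictable $\sigma$. The end-of-predictable-set hypothesis together with Theorem~\ref{thm.az72}, via the defining identity $\expec\bra{\int H\,dA}=\expec\bra{H_\rho\indic_{\{\rho<\infty\}}}$ for bounded predictable $H$, localizes $dA$ on $\{Z^\rho_-=1\}$; continuity of $A$ upgrades this to support inside $\{Z^\rho=1\}$ (since $Z^\rho$ has only countably many jumps). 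Setting $L:=Z^\rho e^{A}$, integration by parts yields
\[
dL=e^{A}\,dM+(Z^\rho-1)e^{A}\,dA,
\]
whose finite-variation part vanishes on the support of $dA$, so $L$ is a local martingale. Non-negativity and $L_0=1$ are immediate; $\expec\bra{A_\infty}=\Pa\bra{\rho<\infty}=1$ forces $A_\infty<\infty$ a.s.\ and hence $L_\infty=0$. Since $L\le e^{A}$ with equality at every $dA$-supporting time and $e^{A}$ is continuous increasing with $(e^{A})_0=1$, one gets $L^*=e^{A}$, continuous. Thus $L\in\Mcal_0$, $L/L^*=Z^\rho$, and $\rho_L=\sup\{t:Z^\rho_{t-}=1\}=\rho$ by Theorem~\ref{thm.az72}.

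For (I)$\Rightarrow$(II), let $L\in\Mcal_0$ and set $\rho:=\rho_L$. The set $\{L_-=L^*_-\}$ is predictable and its end is $\rho_L$ by definition, giving the end-of-predictable-set property. To rule out $\Pa\bra{\rho_L=\sigma}>0$ for a predictable $\sigma$, note that on such an event $L_{\sigma-}=L^*_\sigma=L^*_\infty$ (from $\rho_L=\sigma$ and continuity of $L^*$), with $L_{\sigma-}\in\Fcal_{\sigma-}$; Doob's maximal identity applied to the restart $(L_{\sigma+s}/L_\sigma)_{s\ge 0}$ combined with the martingale jump identity $\expec\bra{\Delta L_\sigma\mid\Fcal_{\sigma-}}=0$ produces a contradiction. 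For the identification $Z^\rho=L/L^*$, integration by parts with $L^*$ continuous of finite variation yields $L/L^*=1+N-\log L^*$ for a local martingale $N$, exhibiting $L/L^*$ as a $[0,1]$-valued supermartingale vanishing at infinity; conditional Doob's maximal identity at a stopping time $T$ gives $\Pa\bra{L^*_\infty>L^*_T\mid\Fcal_T}=L_T/L^*_T$, and combining with $\{\rho_L>T\}=\{L^*_\infty>L^*_T\}$ $\Pa$-a.s.\ identifies $L/L^*$ with $Z^{\rho_L}$. Uniqueness of $L$ follows since $L^*=L/Z^\rho$ (continuous, starting at $1$) together with $L=L^*Z^\rho$ recovers $L$ from $Z^\rho$.

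The main obstacle is establishing $\{\rho_L>T\}=\{L^*_\infty>L^*_T\}$ up to $\Pa$-null sets. The inclusion $\supseteq$ uses continuity of $L^*$, which prevents $L$ from creating new maxima by upward jumps. The reverse requires the ``no return to the maximum'' statement $\Pa\bra{\rho_L>T,\,L^*_\infty=L^*_T}=0$, proved by considering the predictable hitting time $\tau=\inf\{s>T:L_{s-}=L^*_T\}$ on the bad event: the bound $L\le L^*_T$ on $[T,\infty)$ forces $\Delta L_\tau\le 0$, which combined with $\expec\bra{\Delta L_\tau\mid\Fcal_{\tau-}}=0$ forces $L_\tau=L^*_T$, so that the restarted non-negative martingale $L_{\tau+\cdot}$ with initial value $L^*_T$ and bounded above by $L^*_T$ must collapse to the constant $L^*_T$, contradicting $L_\infty=0$. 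This ``no-return'' argument is what embeds the uniqueness-of-maximum statement (Corollary~\ref{cor.d}) into the proof of Theorem~\ref{thm.char}, and the continuity of $L^*$ is the crucial structural input throughout.
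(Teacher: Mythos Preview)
Your (II)$\Rightarrow$(I) direction is genuinely different from the paper's. The paper appeals to the abstract predictable multiplicative decomposition $Z^\rho=LD$ of Proposition~\ref{prop.pr}, proves separately that $D_\rho$ is uniformly distributed (Proposition~\ref{prop.d}), and then combines $Z^\rho_{\rho-}=1$ with Doob's maximal identity in a stochastic-dominance argument to force $L_{\rho-}=L^*_\infty$ and $L\in\Mcal_0$. You instead construct $L=Z^\rho e^{A}$ explicitly and verify $L\in\Mcal_0$ by showing $L^*=e^{A}$, bypassing both the Penner--R\'eveillac decomposition and the uniform-law step entirely. Your route is more elementary and constructive; the paper's makes the role of Doob's maximal identity symmetric in the two implications. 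Note that your assertion $L^*=e^A$ is correct but deserves a line of justification: ``$Z^\rho=1$ at every $dA$-supporting time'' is only a $dA$-a.e.\ statement, so one needs a time-change (via $\xi_r=\inf\{s:A_s>r\}$) to conclude $\sup_{s\le t}L_s\ge e^{A_t}$.

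There is, however, a real gap in your (II)$\Rightarrow$(I): the step ``$\rho_L=\sup\{t:Z^\rho_{t-}=1\}=\rho$ by Theorem~\ref{thm.az72}'' is not valid. That theorem only yields $Z^\rho_{\rho-}=1$, hence $\rho\le\rho_L$; nothing in it rules out $Z^\rho_{t-}=1$ for some $t>\rho$. One clean fix is to use that $a_t=a_{t\wedge\rho}$ when $\rho$ is the end of a predictable set (\cite[Proposition~3]{JeulinYor78a}), giving $L^*_\infty=e^{A_\infty}=e^{A_\rho}=L_{\rho-}$, after which Corollary~\ref{cor.d} forces $\rho=\rho_L$. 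Relatedly, your uniqueness argument is circular as written ($L^*=L/Z^\rho$ and $L=L^*Z^\rho$ are the same equation); the right statement, which you have essentially derived, is that $Z^\rho=1+N-\log L^*$ identifies $\log L^*$ with the compensator $A$ by uniqueness of Doob--Meyer, whence $L=Z^\rho e^A$ is determined by $\rho$. This is exactly the paper's uniqueness argument.

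In your ``no-return'' step, the time $\tau=\inf\{s>T:L_{s-}=L^*_T\}$ is the debut of a predictable set but is not automatically a \emph{predictable} stopping time (one would need $\lbr\tau\rbr$ to lie in that set), so invoking $\expec[\Delta L_\tau\mid\Fcal_{\tau-}]=0$ is not justified; also the bound $L\le L^*_T$ on $\lbr T,\infty\lbr$ holds only on the ``bad'' event, which has to be incorporated into the stopping time. The paper's (I)$\Rightarrow$(II) avoids this by restricting a \emph{given} predictable $\sigma$ to the $\Fcal_{\sigma-}$-set $B=\{L_{\sigma-}=L^*_{\sigma-}\}$, so that $\sigma_B$ remains predictable and $\Delta L_{\sigma_B}\le 0$ holds globally by continuity of $L^*$.
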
 

This multiplicative representation of the Az\'ema supermartingale expresses the fact that the ratio of a local martingale $L\in\Mcal_0$ over its running maximum equals the conditional probability that the time of maximum of $L$ is still ahead. This stresses the fact that by studying $Z^{\rho_L}$ one can infer information on $L$. In \cite{Ngen}, under the assumption that all martingales are continuous, this multiplicative decomposition has been extended to the framework of local submartingales of the class ($\Sigma$).

\begin{remark}\label{rmk.thm}
\it{1.}\ An honest time that avoids all stopping times is in particular the end of a predictable set, as shown in Lemma~\ref{rmk.ht}. This implies that the class of honest times we consider is in general larger than that in Theorems~\ref{thm.ny},\ref{thm.k}. Under the assumption that all martingales are continuous, however, the two classes coincide, since all stopping times are predictable. Therefore our Theorem~\ref{thm.char} is a genuine generalization of Theorem~\ref{thm.ny}. Note also that, in view of \eqref{rr}, Theorem~3.2 in \cite{NikeghbaliPlaten12} can be deduced from our characterization result.

\it{2.}\ Any stopping time $\tau$ is the end of the predictable set $\lbr0,\tau\rbr$. Therefore, the class of honest times considered in Theorem~\ref{thm.char} includes in particular all totally inaccessible stopping times $\tau$ such that $\Pa\bra{0<\tau<\infty}=1$. In this case we are also able to spell out the corresponding local martingales in the sense of Theorem~\ref{thm.char}, see the following example.
\end{remark}

\begin{example}\label{ex.exp}
Let $\tau$ be a totally inaccessible stopping time such that $\Pa\bra{0<\tau<\infty}=1$. Consider $Y:=(I_{\lbr0,\tau\lbr})^p$, the predictable compensator of $I_{\lbr0,\tau\lbr}$, which is a continuous non-increasing process, and the compensated martingale $M:=I_{\lbr0,\tau\lbr}-Y$.
Then the process defined by
\[
L:=\mathcal{E}(M)=\exp(-Y)I_{\lbr0,\tau\lbr}                                                                                                                                                                                                                                                                                                                                                                                                                                                                                                                                                                                                                                                                                                                                                                                                                                                                                                                                                                                                                                                                                                                                                                                                                                                                                                                                                                                                                                                                                                                                                                                                                                                                                                                                                                                                                                                                                                                                                                                                                                                                                                                                                                                                                                                                                                                                                                                                                                                                                                                                                                                                                                                                                                                                                                                                                                                                                                                                                                                                                                                                                                                                                                                                                                                                                                                                                                                                                                                
\]
is a local martingale that belongs to $\Mcal_0$ and has maximum at $\tau$, in the sense that $\rho_L=\tau$.

Note that, since $L$ has a down jump at the time of maximum, it is excluded from the processes considered in Theorem~\ref{thm.k}; see also \cite[Remark~1.4]{Kardaras13}.
\end{example}

\begin{remark}
The request that $\rho$ is the end of a predictable set in Theorem~\ref{thm.char}-(II) cannot be weakened by asking $\rho$ to be the end of an optional set, i.e. $\rho$ being an honest time, as the following example shows. 
\end{remark}

\begin{example}\label{ex.end_pr}
Let $N$ be a Poisson process and $M$ its compensated martingale. Define the process $S:=\mathcal{E}(M)$ and the random time $\rho:=\sup\{t : S_t=S_t^*\}$. Since $S_t$ goes to zero as $t$ goes to infinity, $\rho$ is a finite honest time. Moreover, $S_\rho=S^*_\rho=S^*_\infty$.
Now, the Az\'ema supermartingale associated to $\rho$ satisfies
\begin{equation}\label{eq.chin}
Z^{\rho}_t=\Pa[\rho>t|\Fcal_t]
=\Pa[\exists s>t, S_s\geq S_t^*|\Fcal_t]
=\Pa\Big[\sup_{s\geq t}S_s\geq S^*_t\Big|\Fcal_t\Big]
\leq\frac{S_t}{S_t^*},\quad t\in\R_+,
\end{equation}
cf. Section~4.2.3 in \cite{ACDJ_ex14}.
Indeed, to see the third equality, note that $\{\sup_{s\geq t}S_s\geq S^*_t\}=\{\exists s>t, S_s\geq S_t^*\}\cup\{S_s<S^*_t\; \forall s>t, S_t= S^*_t\}$. Since $\{S_s<S^*_t\; \forall s>t, S_t= S^*_t\}=\{\rho=t\}$, and $\Pa[\rho=t]=0$, the stated equality holds.
The inequality in \eqref{eq.chin} follows by \eqref{conditional_Doob_tau}.
Therefore we have
\[
Z^{\rho}_{\rho-}\leq\frac{S_{\rho-}}{S_{\rho-}^*}<1,
\]
where the strict inequality follows from the nature of the process $S$: $S$ has positive jumps at the same times where $N$ jumps, and it is decreasing between jump times, which implies $S_{\sigma-}<S^*_{\sigma-}$ at every jump time $\sigma$. Since $\rho$ is a finite random time, with $\lbr\rho\rbr\subset\{\Delta S\neq 0\}$, then $S_{\rho-}<S^*_{\rho-}$ holds, as claimed.
In particular, $\rho$ is the end of an optional set but not the end of a predictable set, from Theorem~\ref{thm.az72}.
\end{example}

We conclude the section with an observation regarding the modeling of default times in the hazard-rate approach of credit risk, where the Az\'ema supermartingale already proved to play an important role, see e.g. \cite{EJY00}, \cite{JYC09} and \cite{CN12}. According to this approach, a default time is defined as a random time, say $\tau$, which is not a stopping time with respect to some initial filtration $\bf F$. The whole information available in the market is then described by the filtration ${\bf F}^\tau=(\Fcal^\tau_t)_{t\in\R_+}$ obtained by progressively enlarging $\bf F$ with $\tau$, that is, $\bf{F}^\tau$ is the smallest (right-continuous) filtration that contains $\bf F$ and makes $\tau$ a stopping time:
\[
\Fcal_t^\tau=\Gcal_{t+},\quad \textrm{with}\quad \Gcal_t=\Fcal_t\vee\sigma(\tau\wedge t).
\]
In this approach the default time occurs as a surprise for the market, hence $\tau$ needs to be an $\bf{F}^\tau$-totally inaccessible stopping time. This is the case for the random times identified in Theorem~\ref{thm.char}, as shown in Remark~\ref{rem:h}, implying that the random times we are studying are suitable for modeling default times in the hazard-rate approach of credit risk.

\section{Proof of Theorem~\ref{thm.char}}
In Sections~\ref{sect:pr},~\ref{sect:md},~\ref{sect:db} we show some preliminary results that will be used in order to prove Theorem~\ref{thm.char} in Section~\ref{sect:proof}.

\subsection{Dual predictable and optional projections}\label{sect:pr}
In this section we consider random times $\rho$ such that $\Pa\bra{0<\rho<\infty}=1$, noting that this condition is satisfied by the random times intervening in Theorem~\ref{thm.char}.
We denote by $A$ and $a$ the dual optional and predictable projections of the process $H=\indic_{\lbr\rho,\infty\lbr}$, respectively (see Appendix~\ref{app.dp}).
The Az\'ema supermartingale associated to $\rho$ has then the following additive decompositions
\begin{equation}\label{eq.dm}
Z^{\rho}_t=\expec[A_\infty|\Fcal_t]-A_t=\expec[a_\infty|\Fcal_t]-a_t,
\end{equation}
see Appendix~\ref{app.ad}.
\begin{lemma}\label{lemma.cts}
For a random time $\rho$ such that $\Pa\bra{0<\rho<\infty}=1$ the following hold:
\begin{itemize}
	\item[(a)] $\rho$ avoids all stopping times if and only if $A$ ($\equiv a$) is continuous;
	\item[(b)] $\rho$ avoids all predictable stopping times if and only if $a$ is continuous.
\end{itemize}
\end{lemma}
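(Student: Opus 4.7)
The plan is to translate each avoidance condition on $\rho$ into a jump condition on the corresponding increasing process ($A$ for (a), $a$ for (b)), via the defining relations of the dual optional and predictable projections. The whole argument is driven by the two jump identities
\[
\expec\bra{\Delta A_\tau}=\Pa\bra{\rho=\tau}\qquad\text{and}\qquad\expec\bra{\Delta a_\sigma}=\Pa\bra{\rho=\sigma},
\]
valid for every stopping time $\tau$ and every predictable stopping time $\sigma$. I would read these off by testing the defining relations of $A$ and $a$ against the processes $\indic_{\lbr\tau\rbr}$ (optional) and $\indic_{\lbr\sigma\rbr}$ (predictable), using that $\int \indic_{\lbr\tau\rbr}(s)\,dH_s=\indic_{\{\rho=\tau\}}$ and the analogue for the increments of $A$ and $a$.

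Next, I would combine these identities with the standard exhaustion of jumps by stopping times. Since $A$ is a \cadlag adapted increasing process of integrable variation, its jump support is a thin optional set and is therefore contained in a countable union of graphs of stopping times $(\tau_n)_{n\in\N}$; since $a$ is predictable of integrable variation, its jump support is a thin predictable set contained in a countable union of graphs of predictable stopping times $(\sigma_n)_{n\in\N}$.

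With these tools in hand, (b) is immediate in both directions. If $a$ is continuous then $\Delta a_\sigma=0$ for every predictable stopping time $\sigma$, so the jump identity gives $\Pa\bra{\rho=\sigma}=0$. Conversely, if $\rho$ avoids every predictable stopping time then $\expec\bra{\Delta a_{\sigma_n}}=0$ for every $n$; since $\Delta a\geq 0$, this forces $\Delta a_{\sigma_n}=0$ a.s., and hence $a$ is continuous. Part (a) follows by the identical template, with $A$ in place of $a$ and arbitrary stopping times in place of predictable ones. The parenthetical identity $A\equiv a$ in (a) is then a free bonus: an adapted continuous increasing process is predictable, so once $A$ is known to be continuous it coincides with its own dual predictable projection $a$.

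The main obstacle, such as it is, is purely bookkeeping: I must pin down the jump identities above from the formulation of the dual projections adopted in Appendix~\ref{app.dp}. Once these identities are in hand, the lemma is essentially a one-line invocation of the thin-set exhaustion theorems and the non-negativity of the jumps of $A$ and $a$.
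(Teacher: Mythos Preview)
Your proof is correct and follows the same overall strategy as the paper: relate the jumps of $A$ (resp.\ $a$) at a stopping time (resp.\ predictable stopping time) $\tau$ to the probability $\Pa\bra{\rho=\tau}$, and conclude via the exhaustion of thin sets. The only difference is in how the jump identity is obtained. The paper invokes Theorem~VI.76 of \cite{DellacherieMeyer80} to get the pointwise (conditional) identities $\Delta A_\tau=\Pa\bra{\rho=\tau\mid\Fcal_\tau}$ and $\Delta a_\sigma=\Pa\bra{\rho=\sigma\mid\Fcal_{\sigma-}}$, whereas you test the defining relations of Appendix~\ref{app.dp} against $\indic_{\lbr\tau\rbr}$ and $\indic_{\lbr\sigma\rbr}$ to obtain only the expectation identities $\expec\bra{\Delta A_\tau}=\Pa\bra{\rho=\tau}$ and $\expec\bra{\Delta a_\sigma}=\Pa\bra{\rho=\sigma}$. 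Your route is more self-contained, avoiding the external reference, at the cost of making the exhaustion step explicit; the paper's conditional identities make the equivalence immediate but rely on the cited theorem. One small wording point: in your last sentence, $a$ is the dual predictable projection of $H$, not of $A$; what you mean (and what is true) is that $a$ is also the dual predictable projection of $A$, and a continuous adapted increasing process equals its own dual predictable projection, whence $A=a$.
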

\begin{proof}
Since the increasing process $H$ has a unique jump of size one at time $\rho$, for any stopping time $\tau$ we have $\Delta H_\tau=\indic_{\{\rho=\tau\}}$. Now Theorem VI.$76$ in \cite{DellacherieMeyer80} yields
\begin{eqnarray*}
\Delta A\hspace*{-0.2cm}&=&\hspace*{-0.2cm}\leftidx{^o}{\left(\indic_{\lbr\rho\rbr}\right)},\ \textrm{i.e.}\ \Delta A_\tau=\expec[\Delta H_\tau|\Fcal_\tau]=\Pa[\rho=\tau|\Fcal_\tau]\;\;\textrm{for all stopping times $\tau$},\\
\Delta a\hspace*{-0.2cm}&=&\hspace*{-0.2cm}\leftidx{^p}{\left(\indic_{\lbr\rho\rbr}\right)},\ \textrm{i.e.}\ \Delta a_\tau=\expec[\Delta H_\tau|\Fcal_{\tau-}]=\Pa[\rho=\tau|\Fcal_{\tau-}]\;\;\textrm{for all predictable stopping times $\tau$},
\end{eqnarray*}
from which the statements of the lemma follow.
\end{proof}

\begin{lemma}\label{rmk.ht}
An honest time that avoids all stopping times is the end of a predictable set.
\end{lemma}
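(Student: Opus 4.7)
The plan is to invoke Theorem~\ref{thm.az72} and reduce the statement to establishing $Z^\rho_{\rho-}=1$ a.s. Since $\rho$ avoids all stopping times, in particular $0$ and $+\infty$, we have $\Pa\bra{0<\rho<\infty}=1$, and the additive decomposition~\eqref{eq.dm} applies: $Z^\rho=m-A$, where $m_t:=\expec[A_\infty|\Fcal_t]$ is a uniformly integrable \cadlag martingale and $A$ is the dual optional projection of $\indic_{\lbr\rho,\infty\lbr}$.

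I would then establish $\Delta Z^\rho_\rho=0$ a.s.\ in two pieces. First, by Lemma~\ref{lemma.cts}(a), avoidance of all stopping times forces $A$ to be continuous, hence $\Delta A_\rho=0$. Second, the jump set $\{\Delta m\neq 0\}$ of the \cadlag martingale $m$ is a thin optional set, so $\{\Delta m\neq 0\}\subseteq\bigcup_n\lbr\tau_n\rbr$ for some sequence of stopping times $(\tau_n)_{n\ge 1}$; since $\Pa\bra{\rho=\tau_n}=0$ for every $n$, a union bound yields $\Pa\bra{\Delta m_\rho\neq 0}=0$. Together these give $\Delta Z^\rho_\rho=0$, that is, $Z^\rho_{\rho-}=Z^\rho_\rho$ a.s.

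The remaining input, and what I expect to be the main obstacle, is the classical honesty identity $Z^\rho_\rho=1$ a.s.\ on $\{\rho<\infty\}$, which belongs to the Az\'ema theory of the supermartingale associated with a random time (see e.g.~\cite{Azema72,Jeulin80a}). Its derivation starts from a right-closed optional set $\Gamma$ with $\rho=\sup\Gamma$, so that $(\rho,\omega)\in\Gamma$ a.s.\ on $\{\rho<\infty\}$, and then applies the optional section theorem to transfer the pointwise identity $Z^\rho=1$ (encoded by the fact that on $\Gamma$ the random time $\rho$ has not yet finished) to the random time $\rho$ itself.

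Combining the previous paragraphs gives $Z^\rho_{\rho-}=Z^\rho_\rho=1$ a.s., so Theorem~\ref{thm.az72} implies that $\rho$ is the end of a predictable set, completing the proof.
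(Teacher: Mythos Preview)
Your argument is correct, with one small caveat: the ``classical honesty identity'' is usually stated for $\tilde Z^\rho:=\leftidx{^o}{(\indic_{\lbr 0,\rho\rbr})}$, namely $\tilde Z^\rho_\rho=1$ on $\{\rho<\infty\}$ (this is \cite[Proposition~5.1]{Jeulin80a}, which the paper also invokes), not directly for $Z^\rho$. Since $\tilde Z^\rho-Z^\rho=\leftidx{^o}{(\indic_{\lbr\rho\rbr})}=\Delta A$ and you have already established that $A$ is continuous, the two processes coincide here and $Z^\rho_\rho=1$ does follow; but this passage should be made explicit rather than folded into the citation.

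The paper takes a different route to the same conclusion. It also starts from $\tilde Z^\rho_\rho=1$, but instead of showing $Z^\rho_{\rho-}=1$ and invoking Theorem~\ref{thm.az72}, it computes $\expec\bra{\int\leftidx{^o}{(\indic_{\lbr\rho,\infty\lbr})}\,d\indic_{\lbr\rho,\infty\lbr}}$ in two ways to obtain $A_t=A_{t\wedge\rho}$, then uses avoidance of stopping times to identify $A\equiv a$, and concludes via the Jeulin--Yor criterion \cite[Proposition~3]{JeulinYor78a} that $a_t=a_{t\wedge\rho}$ characterises ends of predictable sets. Your route is more self-contained within the paper, since Theorem~\ref{thm.az72} is already stated while the Jeulin--Yor criterion is an external reference; the paper's route, in exchange, does not need the separate thin-set argument to control $\Delta m_\rho$.
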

\begin{proof}
Let $\rho$ be an honest time. By definition of dual optional projection we have
\[
\expec\bra{\int_{[0,\infty)}\leftidx{^o}{\left(\indic_{\lbr\rho,\infty\lbr}\right)}d\indic_{\lbr\rho,\infty\lbr}}=
\expec\bra{\int_{[0,\infty)}\indic_{\lbr\rho,\infty\lbr}dA}=
\expec\bra{A_\infty-A_{\rho-}}.
\]
Now, let $\tilde{Z}^{\rho}$ be the optional projection of the stochastic interval $\indic_{\lbr0, \rho\rbr}$, and recall that $\Pa\bra{\tilde{Z}^{\rho}_\rho=1}=1$ since $\rho$ is a finite honest time, see  
\cite[Proposition~5.1]{Jeulin80a}.
This gives
\[
\expec\bra{\int_{[0,\infty)}\leftidx{^o}{\left(\indic_{\lbr\rho,\infty\lbr}\right)}d\indic_{\lbr\rho,\infty\lbr}}=
\expec\bra{\leftidx{^o}{\left(\indic_{\lbr\rho,\infty\lbr}\right)}_{\rho}}=
\expec\bra{1-Z^{\rho}_\rho}=
\expec\bra{\tilde{Z}^{\rho}_\rho-Z^{\rho}_\rho}=
\expec\bra{\Delta A_\rho}=
\expec\bra{A_\rho-A_{\rho-}},
\]
showing that $\Pa\bra{A_t=A_{t\wedge\rho}}=1$.
If in addition $\rho$ avoids all stopping times, then $A\equiv a$ by Lemma~\ref{lemma.cts}. This implies $a_t=a_{t\wedge\rho}$, which by \cite[Proposition 3]{JeulinYor78a} is equivalent to the fact that $\rho$ is the end of a predictable set, as claimed.
\end{proof}

\begin{remark}\label{rem:h}
With the notation introduced at the end of Section~\ref{sect:at}, we notice that any random time $\rho$ that avoids all ($\bf F$-) predictable stopping times, is a totally inaccessible stopping time in the enlarged filtration $\bf{F}^\rho$.
Indeed, by Lemma~\ref{lemma.cts}, the dual predictable projection of $H$ is continuous. On the other hand, the compensator of $H$ in $\bf{F}^\rho$ is given by the process $\alpha_t=\int^{t\wedge\rho}_0\frac{da_s}{Z^{\rho}_{s-}}$, by Remark~4.5-(3) in \cite{Jeulin80a}. Therefore $\alpha$ is continuous, or equivalently, $H$ is quasi-left continuous, which means that $\rho$ is a totally inaccessible stopping time in $\bf{F}^\rho$, as claimed.
\end{remark}

\subsection{Multiplicative decomposition of the Az\'ema supermartingale}\label{sect:md}
As proved in \cite{iw65}, any nonnegative supermartingale can be written as the product of a local martingale and a non-increasing process. The decomposition is in general not unique. One can however require conditions on the factorizing processes in order to identify some particular decomposition. This is what we want to do for the
Az\'ema supermartingale associated to a random time. In \cite{Kardaras13} it is noted that the {\it optional} multiplicative decomposition given in \cite{Kardaras12} is useful for the characterization of honest times. Here we adopt a similar argumentation, using instead the {\it predictable} multiplicative decomposition given in \cite{PennerReveillac13}. Precisely, by applying \cite[Proposition 4.5]{PennerReveillac13} to the dual predictable projection of $\indic_{\lbr\rho, \infty\lbr}$, we obtain the following:

\begin{proposition}\label{prop.pr}
Let $\rho$ be a random time and $a$ the dual predictable projection of $\indic_{\lbr\rho, \infty\lbr}$. Then there exists a pair of adapted \cadlag processes $(L,D)$ such that:
\begin{enumerate}
	\item $L$ is a non-negative local martingale with $L_0=1$;
	\item $D$ is a non-increasing predictable process with $0\leq D\leq 1$;
	\item\label{ad} $a_t=-\int_{[0,t]}L_{s-}dD_s$ for all $t\in\R_+$, with $L_{0-}:=1$;
	\item\label{mult.dec} $Z^{\rho}_t=L_tD_t,\; t\in\R_+$;
	\item\label{LDz} $L_t=L_0+\int_0^t\indic_{\{D_s>0\}}dL_s$, $D_t=D_0+\int_0^t\indic_{\{L_{s-}>0\}}dD_s,\; t\in\R_+$.
\end{enumerate}
\end{proposition}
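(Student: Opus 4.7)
The plan is to obtain the multiplicative decomposition as an instance of the predictable factorization result of \cite[Proposition 4.5]{PennerReveillac13}. The required input is already supplied by \eqref{eq.dm}: $Z^\rho$ is a non-negative \cadlag supermartingale with $Z^\rho_0 = 1$ whose additive Doob--Meyer decomposition is $Z^\rho_t = M_t - a_t$, where $M_t := \expec[a_\infty \mid \Fcal_t]$ is a uniformly integrable martingale and $-a$ is the predictable non-increasing finite-variation part. Since the cited proposition is tailored to exactly this class of supermartingales, the entire proof reduces to translating its conclusion back into the statement above.

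The first step is to introduce $D$ as the predictable Dol\'eans-Dade-type exponential
\[
D_t = \exp\!\Bigl(-\int_{(0,t]} \frac{da^c_s}{Z^\rho_{s-}}\Bigr) \prod_{0<s\leq t}\Bigl(1 - \frac{\Delta a_s}{Z^\rho_{s-}}\Bigr),
\]
where $a^c$ denotes the continuous part of $a$, with the convention that $D$ is frozen at $0$ after the first time this value is reached. Non-negativity of $Z^\rho = M - a$ forces $\Delta a_\sigma \leq Z^\rho_{\sigma-}$ at every predictable stopping time $\sigma$, so every factor in the product lies in $[0,1]$; this gives (2). The second step is to set $L := Z^\rho/D$ on $\{D > 0\}$ and extend $L$ by constancy after the first hitting time of $0$ by $D$, which yields a non-negative \cadlag process with $L_0 = 1$ and the product identity (4). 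The third step is (3): applying integration by parts to $Z^\rho = L D$ and matching the predictable finite-variation part on both sides with $-a$ (from the Doob--Meyer decomposition) gives $da_t = -L_{t-}\,dD_t$.

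The step I expect to be the main obstacle is showing that the process $L$ built this way is genuinely a local martingale, which is the nontrivial content of (1). The issue is that $Z^\rho$ may vanish, so the division $Z^\rho/D$ has to be handled with care; the proof would proceed by a Y\oe{}urp-type computation showing that the entire predictable finite variation of $Z^\rho$ has been absorbed into the factor $D$, so that after dividing no finite-variation piece is left in $L$. Property (5) is the accompanying minimality clause that pins down the decomposition uniquely: the freezing convention forces $L$ to be constant on $\{D = 0\}$, giving the first identity in (5), while the defining recipe for $D$ involves division by $Z^\rho_{-} = L_{-}D_{-}$, which automatically freezes $D$ on $\{L_{-} = 0\}$ and yields the second identity. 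Both (1) and (5) are delicate precisely at the boundary $\{Z^\rho = 0\}$, which is the only place where the naive Dol\'eans-Dade construction fails and where the cited proposition does the real work.
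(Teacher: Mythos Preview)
Your approach is exactly the paper's: the proposition is stated in the paper as a direct consequence of \cite[Proposition~4.5]{PennerReveillac13} applied to the dual predictable projection of $\indic_{\lbr\rho,\infty\lbr}$, with no further argument given. Your additional sketch of how that cited proposition is proved (the Dol\'eans--Dade construction of $D$, setting $L=Z^\rho/D$ with freezing, and the Yoeurp-type identification of the local-martingale part) goes beyond what the paper itself supplies, but it is consistent with the source and not needed here.
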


As the continuity of the dual optional projection $A$ of $\indic_{\lbr\rho,\infty\lbr}$ is of major importance in \cite{Kardaras13}, so the continuity of the dual predictable projection $a$ will be in proving the main results of this note. Indeed, note that when $a$ is continuous, then $D$ is continuous as well, which is crucial in the proof of Proposition~\ref{prop.d}. This shows how in the present setting we cannot simply use the multiplicative decomposition considered in \cite{Kardaras13}, based on the process $A$, since in our case $A$ is no more continuous. However, in the particular case of $\rho$ avoiding all stopping times, then $a\equiv A$ and the decomposition in Proposition~\ref{prop.pr}-\eqref{mult.dec} coincides with that in \cite{Kardaras13}.

\begin{proposition}\label{prop.d}
Let $\rho$ be a random time that avoids all predictable stopping times, and let $(L,D)$ be a pair of processes associated to $\rho$ as in Proposition~\ref{prop.pr}. Then $D_\rho$ has standard uniform distribution under $\Pa$.
\end{proposition}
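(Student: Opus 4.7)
The plan is to show that $\Pa[D_\rho \le u]=u$ for every $u\in(0,1)$, so that $D_\rho$ is standard uniform. The idea is to realise $\{D_\rho\le u\}$ (up to a null set) as $\{\rho\ge\tau_u\}$ for a predictable hitting time $\tau_u$ of $D$, and then to compute $\Pa[\rho>\tau_u]=\expec[Z^\rho_{\tau_u}]$ by means of the multiplicative decomposition $Z^\rho=LD$.

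First, I would upgrade the continuity of $a$ (which holds by Lemma~\ref{lemma.cts}(b), since $\rho$ avoids all predictable stopping times) to continuity of $D$. From item $(3)$ of Proposition~\ref{prop.pr} one reads off $\Delta a_t=-L_{t-}\Delta D_t$, so $\Delta D_t=0$ on $\{L_{t-}>0\}$; and item $(5)$ already gives $\Delta D_t=0$ on $\{L_{t-}=0\}$. Hence $D$ is continuous. Now fix $u\in(0,1)$ and set $\tau_u:=\inf\{t\ge0:D_t\le u\}$. Since $D$ is continuous, adapted and predictable with $D_0=1$, the stopping times $\sigma_n:=\inf\{t:D_t\le u+1/n\}$ announce $\tau_u$, so $\tau_u$ is a predictable stopping time with $D_{\tau_u}=u$ on $\{\tau_u<\infty\}$. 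The monotonicity of $D$ yields $\{D_\rho\le u\}=\{\rho\ge\tau_u\}$, and because $\rho$ avoids $\tau_u$ we have $\Pa[\rho=\tau_u]=0$; so it suffices to prove $\Pa[\rho>\tau_u]=u$.

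The key observation is that $L$ is controlled up to time $\tau_u$: for $t\le\tau_u$ we have $D_t\ge u$, so by item $(4)$ of Proposition~\ref{prop.pr}, $L_t=Z^\rho_t/D_t\le 1/u$. Therefore the stopped process $L^{\tau_u}$ is a bounded non-negative local martingale, hence a uniformly integrable martingale with $\expec[L^{\tau_u}_\infty]=L_0=1$. On $\{\tau_u=\infty\}$ one has $D_\infty\ge u>0$ together with $L_\infty D_\infty=Z^\rho_\infty=0$, which forces $L_\infty=0$; thus $L^{\tau_u}_\infty=L_{\tau_u}\indic_{\{\tau_u<\infty\}}$ and $\expec[L_{\tau_u}\indic_{\{\tau_u<\infty\}}]=1$. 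Since $Z^\rho_{\tau_u}=uL_{\tau_u}$ on $\{\tau_u<\infty\}$ and $Z^\rho_{\tau_u}=0$ on $\{\tau_u=\infty\}$, one obtains $\expec[Z^\rho_{\tau_u}]=u$, and the optional projection identity $Z^\rho_{\tau_u}=\Pa[\rho>\tau_u|\Fcal_{\tau_u}]$ then gives $\Pa[\rho>\tau_u]=u$, as required.

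I expect the only delicate point to be the bookkeeping on $\{\tau_u=\infty\}$: one must see that the identity $\expec[L^{\tau_u}_\infty]=1$ transfers to $\expec[L_{\tau_u}\indic_{\{\tau_u<\infty\}}]=1$ without loss, which is exactly what $L_\infty=0$ on $\{\tau_u=\infty\}$ buys us. Everything else is an essentially mechanical application of Proposition~\ref{prop.pr} combined with optional stopping for $Z^\rho$ and the standard fact that a non-negative local martingale bounded from above is automatically a uniformly integrable martingale.
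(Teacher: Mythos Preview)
Your proof is correct and, in spirit, uses the same ingredients as the paper (continuity of $D$, the bound $L\le 1/u$ on $\lbr 0,\tau_u\rbr$, and $L_\infty=0$ on $\{D_\infty>0\}$), but the route to the conclusion is genuinely different. The paper computes $\Pa[D_\rho\ge v]$ by writing it, via the defining property of the dual predictable projection, as $\expec\big[\int_0^\infty\indic_{\{D_t\ge v\}}\,da_t\big]=-\expec\big[\int_0^\infty\indic_{\{D_t\ge v\}}L_t\,dD_t\big]$, and then performs a change of time through the family $(\xi_u)_{u\in(0,1]}$ together with Fubini to reduce the matter to $\int_v^1\expec[L_{\xi_u}]\,du=1-v$. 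You instead identify $\{D_\rho\le u\}$ directly with $\{\rho\ge\tau_u\}$, use the predictability of $\tau_u$ (which the paper never needs) to kill $\Pa[\rho=\tau_u]$, and compute $\Pa[\rho>\tau_u]=\expec[Z^\rho_{\tau_u}]=u\,\expec[L_{\tau_u}]=u$ by optional stopping. Your argument is shorter and avoids the change-of-time machinery; the paper's argument, by integrating over $u$, sidesteps the need to argue predictability of the hitting times. Both hinge on exactly the same martingale fact $\expec[L_{\tau_u}]=1$.

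Two small points worth tightening: $D_0=1$ should be justified (it follows from $\Pa[\rho=0]=0$, hence $Z^\rho_0=1$), and your announcing sequence $\sigma_n$ only works for $n$ large enough that $u+1/n<1$; both are routine.
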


\begin{proof}
For $u\in(0,1]$, we define the stopping time $\xi_u:=\inf\{t\in\R_+ | D_t<u\}$, with the usual convention $\inf\emptyset=+\infty$.
By assumption $\Pa(\rho=0)=0$, and therefore $Z^{\rho}_0=1$, which in turn implies $D_0=1$ by Proposition~\ref{prop.pr}.
By Lemma~\ref{lemma.cts}-(b), $a$ is continuous, hence $D$ is continuous as well, which in turn gives $D_{\xi_u}=u$ on $\{\xi_u<\infty\}$ for all $u\in(0,1]$. Now fix $v\in(0,1]$. By definition of dual predictable projection, and by Proposition~\ref{prop.pr}-\eqref{ad} together with the continuity of $D$, we have
\[
\Pa\bra{D_{\rho}\ge v}=\expec\Big[\int_0^\infty\indic_{\{D_t\ge v\}}da_t\Big]=-\expec\Big[\int_0^\infty\indic_{\{D_t\ge v\}}L_{t}dD_t\Big].
\]
By means of $(\xi_u)_{u\in(0,1]}$ we perform a change-of-time (see e.g. \cite[Ch.1]{BNS10}), obtaining
\[
-\expec\Big[\int_0^\infty\indic_{\{D_t\ge v\}}L_{t}dD_t\Big]
=\expec\Big[\int_{D_\infty}^1\indic_{\{D_{\xi_u}\geq v\}}L_{\xi_u}du\Big].
\]
Now, as in the proof of Lemma~3.4 in \cite{Kardaras13}, we 
note that $\Pa\bra{\rho=\infty}=0$, since $\rho$ avoids all predictable stopping times, which gives $0=Z^{\rho}_\infty=L_\infty D_\infty$. This in turn implies $L_\infty=0$ on $\{D_\infty>0\}$, hence $L_{\xi_u}\indic_{\{\xi_u<\infty\}}=L_{\xi_u}$, since $\{\xi_u=\infty\}\subseteq\{D_\infty>0\}$ holds for $u\in(0,1]$. Therefore
\[
\expec\Big[\int_{D_\infty}^1\indic_{\{D_{\xi_u}\geq v\}}L_{\xi_u}du\Big]
=\expec\Big[\int_{D_\infty}^1\indic_{\{D_{\xi_u}\geq v\}}\indic_{\{\xi_u<\infty\}}L_{\xi_u}du\Big]
=\expec\Big[\int_{D_\infty\vee v}^1\indic_{\{\xi_u<\infty\}}L_{\xi_u}du\Big],
\]
where in the second equality we used the fact that $D_{\xi_u}=u$ on $\{\xi_u<\infty\}$.
We proceed by noticing that $\xi_u=\infty$ on $\{D_\infty>u\}$, which yields
\[
\expec\Big[\int_{D_\infty\vee v}^1\indic_{\{\xi_u<\infty\}}L_{\xi_u}du\Big]
=\expec\Big[\int_v^1\indic_{\{\xi_u<\infty\}}L_{\xi_u}du\Big]
=\expec\Big[\int_v^1L_{\xi_u}du\Big],
\]
again using the equality
$L_{\xi_u}\indic_{\{\xi_u<\infty\}}=L_{\xi_u}$.
Finally, for $u\in(0,1]$, since $Z^{\rho}\leq 1$ and $D\geq u$ on $\lbr0,\xi_u\rbr$, then $L\leq 1/u$ holds on $\lbr0,\xi_u\rbr$ by Proposition~\ref{prop.pr}-\eqref{mult.dec}. This implies that $L^{\xi_u}$ is a true martingale, hence, in particular, $\expec[L_{\xi_u}]=1$. Therefore, by Fubini's theorem we get
\[
\expec\Big[\int_v^1 L_{\xi_u}du\Big]
=\int_v^1\expec[L_{\xi_u}]du=1-v.
\]
Altogether we proved that $\Pa\bra{D_{\rho}\ge v}=1-v$ for $v\in(0,1]$, which shows that $D_\rho$ has standard uniform distribution.
\end{proof}

\subsection{Doob's maximal identity}\label{sect:db}
Doob's maximal identity states that, for $L\in\Mcal_0$, $1/L_\infty^*$ has uniform distribution. This is a consequence of the optional stopping theorem and has been known for some time, see e.g. \cite[Exercise~II.3.12]{RY99}. In \cite{NikeghbaliYor06} this identity is exploited for the first time in connection with the multiplicative decomposition of the Az\'ema supermartingale and used to derive the conditional distribution of $L_\infty^*$; see also \cite{Ngen} for an extension to the framework of local submartingales of the class ($\Sigma$) in a context where all martingales are continuous.
A generalization of the Doob's maximal identity has then been proved in \cite{Kardaras13}, in the form of Lemma~\ref{lemma_Doob}. This plays a crucial role in proving our main result, and below we provide a proof of it based on the pathwise super-replication of the digital option with payoff $\Phi:=\indic_{\{L^*_\infty\geq x\}}$, for $x\in(1,\infty)$. It corresponds to finding an upper bound on the no-arbitrage price of $\Phi$, and the exact price under $\Pa$ when $L\in\Mcal_0$, see Remark~\ref{rmk_Doob}.

\begin{lemma}\label{lemma_Doob}
Let $L$ be a nonnegative local martingale with $L_0 = 1$. Then $\Pa\bra{L_\infty^*\geq x}\leq 1/x$ holds for all $x\in(1,\infty)$. Furthermore, $\Pa\bra{L_\infty^*\geq x}= 1/x$ holds for all $x\in(1,\infty)$ if and only if $L\in\Mcal_0$.
\end{lemma}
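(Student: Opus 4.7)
For the upper bound, I would use the first-passage stopping time $\tau_x:=\inf\{t\geq 0 : L_t\geq x\}$ for $x\in(1,\infty)$. Right-continuity of $L$ gives $L_{\tau_x}\geq x$ on $\{\tau_x<\infty\}$, and one has the inclusions $\{L_\infty^*>x\}\subseteq\{\tau_x<\infty\}\subseteq\{L_\infty^*\geq x\}$. The stopped process $L^{\tau_x}$ is a nonnegative local martingale, hence a supermartingale, so $\expec[L^{\tau_x}_\infty]\leq L_0=1$; combined with the pointwise bound $L^{\tau_x}_\infty\geq x\indic_{\{\tau_x<\infty\}}$ this yields $\Pa\bra{\tau_x<\infty}\leq 1/x$. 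This is the analytic content of the pathwise super-replication announced in Remark~\ref{rmk_Doob}: the self-financing strategy holding $1/x$ units of $L$ up to $\tau_x$ (and cash thereafter) has initial capital $1/x$, and its terminal wealth dominates the digital indicator on $\{\tau_x<\infty\}$. Letting the barrier approach $x$ strictly from below yields $\Pa\bra{L_\infty^*\geq x}\leq 1/x$.

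For the implication $L\in\Mcal_0\Rightarrow$ equality, continuity of $L^*$ forces $L_{\tau_x}=x$ on $\{\tau_x<\infty\}$: since $L_s<x$ for $s<\tau_x$ one has $L^*_{\tau_x-}\leq x$, and by continuity $L_{\tau_x}\leq L^*_{\tau_x}=L^*_{\tau_x-}\leq x$, which combined with $L_{\tau_x}\geq x$ gives the equality. Hence $L^{\tau_x}$ is bounded by $x$, so it is a uniformly integrable martingale, and using $L_\infty=0$ its terminal value equals $x\indic_{\{\tau_x<\infty\}}$. Therefore $x\,\Pa\bra{\tau_x<\infty}=\expec[L^{\tau_x}_\infty]=1$, which combined with the inclusions above saturates the upper bound.

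For the converse, assume $\Pa\bra{L_\infty^*\geq x}=1/x$ for every $x\in(1,\infty)$. Then the law of $L_\infty^*$ is atomless on $(1,\infty)$, so $\Pa\bra{\tau_x<\infty}=1/x$ and the inequality chain above becomes an equality: $L^{\tau_x}_\infty=x\indic_{\{\tau_x<\infty\}}$ almost surely, which forces $L_{\tau_x}=x$ on $\{\tau_x<\infty\}$ and $L_\infty=0$ on $\{\tau_x=\infty\}$. Letting $x\to\infty$ along integers, the events $\{\tau_x=\infty\}$ exhaust $\{L_\infty^*<\infty\}$, which has full measure, so $L_\infty=0$ almost surely. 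For the continuity of $L^*$, I would fix a countable dense subset $Q\subset(1,\infty)$ and combine the null exceptional sets arising from the identity $L_{\tau_x}=x$ for $x\in Q$; on the resulting full-measure event, a hypothetical jump $L^*_{t-}<L^*_t$ would force $L^*_t=L_t$, and any $x\in Q\cap(L^*_{t-},L^*_t)$ would satisfy $\tau_x=t$ with $L_{\tau_x}=L_t>x$, contradicting $L_{\tau_x}=x$. Hence $L^*$ has no jumps almost surely, and $L\in\Mcal_0$.

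The main subtlety lies precisely in this last step: each level $x$ individually gives only an almost-sure identity, whereas ruling out jumps of $L^*$ requires them simultaneously for all $x$ in a dense set. The quantifier exchange is legitimate because $Q$ is countable, and the pathwise contradiction rests on the fact that every upward jump of $L^*$ spans a nonempty open interval of levels, which cannot avoid every rational.
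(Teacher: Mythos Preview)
Your proof is correct and follows essentially the same approach as the paper: the first-passage stopping time $\tau_x$, the pathwise domination $\indic_{\{\tau_x<\infty\}}\leq L_{\tau_x}/x$, and the supermartingale/bounded-martingale dichotomy for the two directions (the paper uses the strict version $\tau_x=\inf\{t:L_t>x\}$, but this is immaterial). Your treatment of the converse is in fact more careful than the paper's: you explicitly pass from the per-level identities $L_{\tau_x}=x$ to the continuity of $L^*$ via a countable dense set of barriers, whereas the paper only asserts that the pathwise equality conditions ``hold $\Pa$-a.s.\ if and only if $L\in\Mcal_0$'' without spelling out this quantifier exchange.
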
 

\begin{proof}
Let $L$ be a nonnegative local martingale with $L_0=1$. For $x\in(1,\infty)$, define the stopping time $\tau_x:=\inf\{t>0 : L_t> x\}$. Then the following pathwise inequalities trivially hold
\begin{equation}\label{doob.in}
\indic_{\{L^*_\infty> x\}}=\indic_{\{\tau_x<\infty\}}\leq L_{\tau_x}/x,
\end{equation}
and by taking expectations we obtain $\Pa\bra{L_\infty^*> x}\leq \expec\bra{L_{\tau_x}}/x\leq 1/x$, for all $x\in(1,\infty)$. Clearly, this also gives $\Pa\bra{L_\infty^*\geq x}\leq 1/x$, for all $x\in(1,\infty)$.
Now note that the inequality in \eqref{doob.in} is indeed an equality for all $x\in(1,\infty)$ if and only if the two following facts hold: $L_{\tau_x}=x$ on $\{\tau_x<\infty\}$ for all $x\in(1,\infty)$, and $L_\infty=0$ on $\cup_{x\in(1,\infty)}\{\tau_x=\infty\}=\Omega$. These conditions hold $\Pa$-a.s. if and only if $L\in\Mcal_0$. 
In this case $L^{\tau_x}$ is a true martingale, since bounded, and passing to the expectations gives $\Pa\bra{L_\infty^*> x}= \expec\bra{L_{\tau_x}}/x= 1/x$ for all $x\in(1,\infty)$. This concludes the proof.
\end{proof}

\begin{remark}\label{rmk_Doob}
With the notation introduced above, the inequality in \eqref{doob.in} means that the option $\Phi$ is pathwise super-replicated by following 
the simple strategy which consists in buying $1/x$ shares of $L$ at time $0$, and selling them when (if) the stock price exceeds the value $x$ (that is, at time $\tau_x$ if $\tau_x<\infty$). Clearly, having equality in \eqref{doob.in} corresponds to exact pathwise replication.
Note that \eqref{doob.in} can be seen as a particular case of the pathwise inequality given in \cite{MR1839367}, where calls are used to hedge digital options.
Being $L_0=1$, the above strategy has initial cost $1/x$, and it is an admissible strategy in the sense of \cite{MR1304434}, since the corresponding portfolio value is uniformly bounded from below.
Now, by no-arbitrage arguments, the capital $1/x$ needed to set up this portfolio cannot be smaller than the price of $\Phi$. On the other hand, for $L\in\Mcal_0$ the portfolio value of the above strategy is also uniformly bounded from above, hence selling this portfolio is admissible as well, and so the option $\Phi$ is constrained to have the same price $1/x$ of the replicating portfolio.
\end{remark}

Let $L$ be a nonnegative local martingale with $L_0 = 1$. Then, for any stopping time $\tau$, on the set $\{\tau<\infty\}$ we have 
\begin{equation}\label{conditional_Doob_tau}
\Pa\bra{\sup_{t\geq\tau}L_t \geq L_\tau^* \Big| \mathcal{F}_\tau}\leq \frac{L_\tau}{L_\tau^*},
\end{equation}
with equality holding if $L^*$ is continuous on $\lbr\tau,\infty\lbr$ and $L_\infty=0$, in which case we have
\begin{equation}\label{conditional_Doob_eq_tau}
\Pa\bra{\sup_{t\geq\tau}L_t > L_\tau^* \Big| \mathcal{F}_\tau}=\Pa\bra{\sup_{t\geq\tau}L_t \geq L_\tau^* \Big| \mathcal{F}_\tau}=\frac{L_\tau}{L_\tau^*}.
\end{equation}
Indeed, this is clearly true on $\{L_\tau=0\}$, and on $\{L_\tau>0\}$ it follows by applying Lemma~\ref{lemma_Doob} to the local martingale $\left(\frac{L_{\tau+t}}{L_\tau}\right)_{t\geq0}$ in the filtration $(\Fcal_{\tau+t})_{t\geq0}$.

\begin{corollary}\label{cor.d}
Let $L\in\Mcal_0$ and let $\rho$ be a time of maximum for $L$, in the sense that $\Pa\bra{L_{\rho-}=L^*_\infty}=1$. Then $\Pa\bra{\rho=\rho_L}=1$, that is, the time of maximum is unique. Moreover, the Az\'ema supermartingale corresponding to $\rho$ is given by $Z^{\rho}=L/L^*$.
\end{corollary}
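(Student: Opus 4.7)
The plan is to first establish the multiplicative identity $Z^\rho = L/L^*$ by sandwiching $\{\rho > t\}$ between two events whose conditional probability given $\Fcal_t$ is $L_t/L^*_t$, via the conditional Doob identity \eqref{conditional_Doob_eq_tau}. Uniqueness will then come almost for free by applying the same argument to $\rho_L$ and combining it with the a.s.\ inequality $\rho \leq \rho_L$.

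The inequality $\rho \leq \rho_L$ is immediate from the hypothesis: since $L_{\rho-}=L^*_\infty$ and the trivial chain $L_{\rho-}\leq L^*_{\rho-}\leq L^*_\infty$ holds, we get $L_{\rho-}=L^*_{\rho-}$, so $\rho$ lies in the set whose supremum defines $\rho_L$. Next, for fixed $t\geq 0$, I would verify the two inclusions
\[
\{L^*_\infty > L^*_t\}\ \subseteq\ \{\rho > t\}\ \subseteq\ \Big\{\sup_{s\geq t} L_s \geq L^*_t\Big\}.
\]
On the left-hand event one has $L_{\rho-}=L^*_\infty>L^*_t$, while $L_{s-}\leq L^*_s\leq L^*_t$ for every $s\leq t$, which forces $\rho>t$. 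On $\{\rho>t\}$ the nonempty interval $(t,\rho)$ carries values of $L$ converging to $L_{\rho-}=L^*_\infty\geq L^*_t$, so $\sup_{s\geq t}L_s\geq L^*_t$. Applying the strict and the weak form of \eqref{conditional_Doob_eq_tau} at the deterministic stopping time $t$, both outer events have conditional probability $L_t/L^*_t$ given $\Fcal_t$; hence $Z^\rho_t = L_t/L^*_t$ for every $t\geq 0$.

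For uniqueness, recall from the discussion preceding the corollary that $\rho_L$ itself satisfies $L_{\rho_L-}=L^*_\infty$, so the same sandwich yields $Z^{\rho_L}=L/L^*$. Taking expectations gives $\Pa[\rho>t]=\Pa[\rho_L>t]$ for every $t\geq 0$; combined with $\rho\leq\rho_L$ a.s.\ and the standard decomposition $\{\rho<\rho_L\}=\bigcup_{q\in\mathbb{Q}_+}\{\rho\leq q<\rho_L\}$, this forces $\Pa[\rho<\rho_L]=0$, as desired.

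I expect the main delicacy to be the squeeze: the two sandwich events differ precisely on $\{L^*_\infty=L^*_t\}$, and it is the agreement of the strict and weak forms in \eqref{conditional_Doob_eq_tau} that pinches $\Pa[\rho>t\,|\,\Fcal_t]$ down to $L_t/L^*_t$. This agreement is specific to $\Mcal_0$: it expresses that on $\{L^*_\infty=L^*_t\}$ the running maximum is a.s.\ not revisited strictly after $t$, which is exactly the information one needs to prevent the outer inclusions from being strict.
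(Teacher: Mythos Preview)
Your argument is correct and is precisely the approach the paper points to: the sandwich $\{L^*_\infty>L^*_t\}\subseteq\{\rho>t\}\subseteq\{\sup_{s\geq t}L_s\geq L^*_t\}$ combined with the strict and weak forms of \eqref{conditional_Doob_eq_tau} is exactly the content of \cite[Lemma~3.6]{Kardaras13}, which the paper invokes without spelling out the details. Your uniqueness step via $\rho\leq\rho_L$ and equality of one-dimensional marginals is the natural conclusion of the same idea.
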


\begin{proof}
The proof follows from \eqref{conditional_Doob_eq_tau} by the same arguments used in \cite[Lemma~3.6]{Kardaras13}.
\end{proof}

\begin{corollary}\label{cor.m}
For $L\in\Mcal_0$, define the random time
\[\rho'_L:=\sup\{t>0 : L_{t}=L^*_{t}\}.\]
Then $\Pa\bra{\rho_L=\rho'_L}=1$.
\end{corollary}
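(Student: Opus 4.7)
The plan is to apply Corollary~\ref{cor.d}: it suffices to verify that $L_{\rho'_L-}=L^*_\infty$ almost surely, since the uniqueness statement in Corollary~\ref{cor.d} will then force $\rho'_L=\rho_L$ a.s. First observe that $\rho'_L<\infty$ a.s., because $L_\infty=0<L^*_\infty$ makes the set $G:=\{t>0 : L_t=L^*_t\}$ bounded above.

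As a first step I would prove $L^*_{\rho'_L}=L^*_\infty$, i.e.\ that $L^*$ is flat on $[\rho'_L,\infty)$. Arguing by contradiction: if $L_s>L^*_{\rho'_L}$ at some $s>\rho'_L$, set $\tau:=\inf\{s>\rho'_L:L_s>L^*_{\rho'_L}\}$. Continuity of $L^*$ combined with $L_u\le L^*_{\rho'_L}$ on $(\rho'_L,\tau)$ gives $L^*_\tau=L^*_{\rho'_L}$, and right-continuity of $L$ forces $L_\tau\ge L^*_{\rho'_L}=L^*_\tau$, whence $\tau\in G$. Since $\sup G=\rho'_L$, the case $\tau>\rho'_L$ is excluded and we must have $\tau=\rho'_L$; a refined argument applied to a sequence $s_n\downarrow\rho'_L$ with $L_{s_n}>L^*_{\rho'_L}$, analysing where the supremum $L^*_{s_n}$ is attained, eventually produces a $G$-point strictly after $\rho'_L$, yielding the desired contradiction.

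For the second step, $L_{\rho'_L-}=L^*_{\rho'_L}$, I would split according to whether the supremum defining $\rho'_L$ is attained. If it is not attained, I choose $(t_n)\subset G$ with $t_n\uparrow\rho'_L$ strictly, and pass to the limit in $L_{t_n}=L^*_{t_n}$ using left-continuity of $L_-$ together with continuity of $L^*$. If the supremum is attained and $L$ is continuous at $\rho'_L$, the equality is immediate.

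The main obstacle is the remaining configuration, in which $\rho'_L\in G$ with $L$ having an upward jump at $\rho'_L$ (a downward jump being incompatible with $L_{\rho'_L}=L^*_{\rho'_L}$ and $L_-\le L^*_-$); in that case $L_{\rho'_L-}<L^*_{\rho'_L}$, which would break the argument. I expect this sub-case to have probability zero, essentially because a local martingale in $\Mcal_0$ cannot, a.s., jump upward at the last time at which it attains its running maximum and land precisely on $L^*_\infty$; this can be rigorously excluded by combining the first step with the conditional Doob identity \eqref{conditional_Doob_eq_tau}. Once this sub-case is dismissed, the two steps together give $L_{\rho'_L-}=L^*_\infty$ a.s., and Corollary~\ref{cor.d} concludes $\rho'_L=\rho_L$ a.s.
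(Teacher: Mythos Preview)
Your route differs from the paper's. Rather than proving $L_{\rho'_L-}=L^*_\infty$ directly, the paper first shows $\rho_L\le\rho'_L$ (via a case split on $\Lambda:=\{L_{\rho'_L-}=L^*_\infty\}$: on $\Lambda$ one has $\rho'_L=\rho_L$ by Corollary~\ref{cor.d}, while on $\Lambda^c$ the continuity of $L^*$ forces the existence of a strictly earlier time of maximum, so $\rho_L<\rho'_L$) and then compares Az\'ema supermartingales: $Z^{\rho'_L}=L/L^*$ by the external result \cite{NikeghbaliPlaten12}, while $Z^{\rho_L}=L/L^*$ from Theorem~\ref{thm.char}; hence the optional projection of $\indic_{\lbr\rho_L,\rho'_L\lbr}$ vanishes and $\rho_L=\rho'_L$. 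In particular, the paper never excludes your ``upward-jump'' configuration directly---it absorbs that event into $\Lambda^c$ and eliminates it through the supermartingale comparison.

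Your outline leaves two genuine gaps. In Step~1, the ``refined argument'' for the residual case $\tau=\rho'_L$ is not supplied, and it is not routine: the supremum of a \cadlag path over $(\rho'_L,s_n]$ may be attained only as a left limit, which produces a point of $\{L_-=L^*_-\}$ rather than of $G=\{L=L^*\}$, so no contradiction with $\rho'_L=\sup G$ follows immediately. More importantly, for the upward-jump sub-case you invoke \eqref{conditional_Doob_eq_tau}, but that identity is stated for \emph{stopping times}, and $\rho'_L$ is not one; you do not say how to bridge this. The missing device is to exhaust the jump times of $L$ by a sequence of stopping times $(\sigma_n)$ and apply \eqref{conditional_Doob_eq_tau} at each $\sigma_n$: on $\{\sigma_n<\infty,\ L_{\sigma_n}=L^*_{\sigma_n}\}$ one obtains $\sup_{t\ge\sigma_n}L_t>L^*_{\sigma_n}$ a.s., which---once Step~1 is secured---contradicts $L^*_{\sigma_n}=L^*_\infty$ on the upward-jump event. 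Without this step, the exclusion remains a plausibility claim rather than a proof.
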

\begin{proof}
We first prove the inequality $\rho_L\leq\rho'_L$. Recall that $L_{\rho_L-}=L^*_\infty$ (see the discussion after Definition~\ref{def:rhol}), and define the set $\Lambda:=\{L_{\rho'_L-}=L^*_\infty\}$. 
Note that $\rho_L=\rho'_L$ on $\Lambda$. Indeed, suppose this is not true and consider the random time $\sigma:=\rho'_L\indic_\Lambda+\rho_L\indic_{\Lambda^c}$. Since $L_{\rho'_L-}=L^*_\infty=L_{\rho_L-}$ on $\Lambda$, then $\sigma$ is a time of maximum in the sense that $L_{\sigma-}=L^*_\infty$, which gives a contradiction by Corollary~\ref{cor.d}.
On the other hand, on $\Lambda^c$ we have $L_{\rho'_L-}<L^*_\infty$, and then $L_{\rho'_L}=L^*_\infty$ must hold, since $\rho'_L=\sup\{t>0 : L_{t}=L^*_\infty\}$.
Therefore, on $\Lambda^c$ we have $L_{\rho'_L-}<L^*_\infty=L_{\rho'_L}$, which means that $L$ jumps to its overall maximum $L^*_\infty$ at time $\rho'_L$. Since $L^*$ is continuous, this implies the existence of a random time $\rho''$ such that $\rho''<\rho'_L$ and $L_{\rho''-}=L^*_\infty=L^*_{\rho'_L}$ hold on $\Lambda^c$. Then $\rho''=\rho_L$ on $\Lambda^c$, again by Corollary~\ref{cor.d}. Therefore, $\rho_L=\rho''<\rho'_L$ on $\Lambda^c$, which concludes the proof of $\rho_L\leq\rho'_L$.

Now, in \cite{NikeghbaliPlaten12} it is shown that the Az\'ema supermartingale associated to $\rho'_L$ satisfies $Z^{\rho'_L}=L/{L^*}$. Therefore, from Theorem~\ref{thm.char} we get that the optional projection $\leftidx{^o}{(\indic_{\lbr\rho_L, \rho'_L\lbr})}$ is equal to zero up to indistinguishability. This gives $\Pa\bra{\rho_L=\rho'_L}=1$.
\end{proof}

\subsection{Proof of Theorem~\ref{thm.char}}\label{sect:proof}

(I) $\Rightarrow$ (II): Let $L\in\Mcal_0$. By definition, $\rho_L$ is the end of the predictable set $\{L_-=L^*_-\}$ and $\Pa\bra{\rho_L<\infty}=1$. It remains to prove that $\rho_L$ avoids all predictable stopping times. To this end, fix a predictable stopping time $\tau$ such that $\Pa[\tau<\infty]=1$, and define $B:=\{L_{\tau-}=L^*_{\tau-}\}\in\Fcal_{\tau-}$. We denote by $\tau_B$ the restriction given by $\tau_B=\tau$ on $B$ and $\tau_B=\infty$ elsewhere. Recall that $\Delta L_{\tau_B}\indic_{\{\tau_B=\infty\}}=0$ by convention. By Proposition~I.2.10 in \cite{JS03}, $\tau_B$ is predictable, which in turn yields $\expec[\Delta L_{\tau_B}|\Fcal_{\tau_B-}]=0$. On the other hand, on $\{\Delta L_{\tau_B}>0\}$ we have $L^*_{\tau_B}>L^*_{\tau_B-}$, which is impossible because $L^*$ is continuous. Consequently, $\Pa[\Delta L_{\tau_B}\leq 0]=1$, which in turn implies $\Pa\bra{\Delta L_{\tau_B}=0}=1$. Now we proceed as in the proof of $(2)\Rightarrow(1)$ in \cite[Theorem~1.2]{Kardaras13} and note that, since $L_{\rho_L-}=L^*_{\rho_L-}=L^*_{\rho_L}$ (see the discussion after Definition~\ref{def:rhol}), we have $\{\rho_L=\tau, L_\tau<L_\tau^*\}\subseteq\{L_{\tau-}=L_{\tau-}^*, \Delta L_\tau<0\}=\{\tau_B<\infty, \Delta L_{\tau_B}<0\}$. From $\Pa\bra{\Delta L_{\tau_B}=0}=1$, we then deduce that $\Pa\bra{\rho_L=\tau, L_\tau<L_\tau^*}=0$. On the other hand, on $\{L_\tau=L_\tau^*\}$ we have $\Pa\bra{\sup_{t>\tau}L_t>L_\tau^*|\Fcal_\tau}=\Pa\bra{\sup_{t\geq\tau}L_t>L_\tau^*|\Fcal_\tau}=\frac{L_\tau}{L_\tau^*}=1$, by \eqref{conditional_Doob_eq_tau}, which gives $\Pa\bra{\rho_L=\tau, L_\tau=L_\tau^*}=0$.
It follows that $\Pa\bra{\rho_L=\tau}=0$, as wanted.
\smallskip

(II) $\Rightarrow$ (I): Let $\rho$ be the end of a predictable set such that it avoids all predictable stopping times, and let $(L,D)$ be a pair of processes associated to $\rho$ as in Proposition~\ref{prop.pr}.
We first prove that $\rho$ is a time of maximum for $L$, in the sense that $\Pa\bra{L_{\rho-}=L^*_\infty}=1$, and that $L\in\Mcal_0$.
Since $\rho$ is the end of a predictable set and $\Pa\bra{\rho>0}=1$, $Z^{\rho}_{\rho-}=1$ follows by Theorem~\ref{thm.az72} and, by Proposition~\ref{prop.pr}-\eqref{mult.dec}, $1=Z^{\rho}_{\rho-}=L_{\rho-}D_{\rho-}=L_{\rho-}D_\rho$ holds. This implies that $1/L_{\rho-}$ has uniform distribution, by Proposition~\ref{prop.d}. On the other hand, from Lemma~\ref{lemma_Doob}, $1/L^*_\infty$ dominates a uniform random variable in the sense of first order stochastic dominance. This means that $X:=1/L^*_\infty$ dominates $Y:=1/L_{\rho-}$ in first order stochastic dominance, and since $\Pa\bra{X\leq Y}=1$, then $\Pa\bra{X=Y}=1$ follows, that is, $\Pa\bra{L_{\rho-}=L^*_\infty}=1$. This in turn implies that $1/L^*_\infty$ has uniform distribution, hence $L\in\Mcal_0$ by Lemma~\ref{lemma_Doob}. Now, $\rho=\rho_L$ and $Z^{\rho}=L/{L^*}$ follow from Corollary~\ref{cor.d}.
We are left to show that $\rho$ identifies a unique $L\in\Mcal_0$. In order to do so, we proceed as in \cite{NikeghbaliYor06} and apply It\^o's formula to $Z^{\rho}=L/{L^*}$, obtaining the following Doob-Meyer decomposition of $Z^{\rho}$:
\[Z^{\rho}_t=\mathbb{E}[\log L^*_\infty|\Fcal_t]-\log L_t^*=1+\int_0^t\frac{1}{L^*_s}dL_s-\log L_t^*,\]
by \eqref{eq.dm}.
The uniqueness of the Doob-Meyer decomposition implies that if $L,M\in\Mcal_0$ are such that $\rho_L=\rho_M$, then $L=M$ holds up to indistinguishability.
\qed

\appendix
\section{Basic notions}
In this section we recall the notions of optional (resp. predictable) projection and of dual optional (resp. predictable) projection, which play a fundamental role in this note.
\subsection{The optional and predictable projections}\label{app.p}
Let $X$ be a non-negative measurable process.
The \emph{optional projection} of $X$ is the unique (up to indistinguishability) optional process $Y$ such that
\[
\expec\bra{X_\tau\indic_{\{\tau<\infty\}}|\Fcal_\tau}=Y_\tau\indic_{\{\tau<\infty\}}\;\, \textrm{a.s.}
\]
for every stopping time $\tau$.

The \emph{predictable projection} of $X$ is the unique (up to indistinguishability) predictable process $N$ such that
\[
\expec\bra{X_\tau\indic_{\{\tau<\infty\}}|\Fcal_{\tau-}}=N_\tau\indic_{\{\tau<\infty\}}\;\, \textrm{a.s.}
\]
for every predictable stopping time $\tau$.

\subsection{The dual optional and predictable projections}\label{app.dp}

Let $H$ be an integrable raw increasing process.
The \emph{dual optional projection} of $H$ is the optional increasing process $U$ defined by
\[
\expec\bra{\int_{[0,\infty)}X_tdU_t}=
\expec\bra{\int_{[0,\infty)}X_tdH_t}
\]
for any bounded optional process $X$.

The \emph{dual predictable projection} of $H$ is the predictable increasing process $V$ defined by
\[
\expec\bra{\int_{[0,\infty)}X_tdV_t}=
\expec\bra{\int_{[0,\infty)}X_tdH_t}
\]
for any bounded predictable process $X$.

\subsection{The additive decompositions in \eqref{eq.dm}}\label{app.ad}
Let $A$ and $a$ denote the dual optional and predictable projections of the process $H_t:=\indic_{\{\rho\leq t\}}$, respectively. From the previous section we have that, for every bounded predictable process $X$,
\begin{equation}\label{eq.ap}
\expec\bra{X_\rho}=
\expec\bra{\int_{[0,\infty)}X_tdA_t}=\expec\bra{\int_{[0,\infty)}X_tda_t}.
\end{equation}
Fix $t\in\R_+$ and $F_t$ bounded $\Fcal_t$-measurable, then $X_s:=F_t1_{\{t<s\}}$ defines a predictable process, and from \eqref{eq.ap} we have
\[
\expec\bra{F_t\indic_{\{t<\rho\}}}=\expec\bra{F_t(A_\infty-A_t)}=\expec\bra{F_t(a_\infty-a_t)}.
\]
This in turn yields
\[
Z^{\rho}_t=\expec\bra{\indic_{\{t<\rho\}}|\Fcal_t}=\expec\bra{A_\infty|\Fcal_t}-A_t=\expec\bra{a_\infty|\Fcal_t}-a_t,
\]
which is exactly equation \eqref{eq.dm}.

\bibliographystyle{alpha}

\providecommand{\bysame}{\leavevmode\hbox to3em{\hrulefill}\thinspace}
\providecommand{\MR}{\relax\ifhmode\unskip\space\fi MR }

\providecommand{\MRhref}[2]{%
  \href{http://www.ams.org/mathscinet-getitem?mr=#1}{#2}
}
\providecommand{\href}[2]{#2}

\end{document}